\newtheorem{theorem}{Theorem}
\newtheorem{corollary}[theorem]{Corollary}
\newtheorem{lemma}[theorem]{Lemma}
\newtheorem{proposition}[theorem]{Proposition}
\newcommand{\supp}{\operatorname{supp}}
\newcommand{\RR}{\mathbb{R}}
\newcommand{\ZZ}{\mathbb{Z}}
\newcommand{\A}{\mathcal{A}}
\newcommand{\dH}{d_{\operatorname{Ham}}}
\newcommand{\Ham}{\mathrm{Ham}}
\newcounter{dawidcomments}
\newcounter{piotrcomments}
\author{Dawid Kielak}
\address{Dawid Kielak, Mathematical Institute, University of Oxford, Andrew Wiles Building,
	Radcliffe Observatory Quarter,
	Woodstock Road,
	Oxford
	OX2 6GG,
	United Kingdom}
\email{kielak@maths.ox.ac.uk}
\author{Piotr W. Nowak}
\address{Piotr W. Nowak, Institute of Mathematics of the Polish Academy od Sciences, \'{S}niadeckich 8, 00-656 Warsaw, Poland}
\email{pnowak@impan.pl}
\title{Coboundary expansion and Gromov hyperbolicity}
\begin{document}



\begin{abstract}
We prove that if a compact $n$-manifold admits a sequence of residual covers that form a coboundary expander in dimension $n-2$, then the manifold has Gromov-hyperbolic fundamental group. In particular, residual sequences of covers of non-hyperbolic compact connected irreduc\-ible 3-manifolds are not 1-coboundary expanders.
\end{abstract}

\maketitle

\section{Introduction}

The notion of high-dimensional expanders grew out of the natural need to generalize expansion in graphs to higher dimensions. 
There are a few (inequivalent) notions of such high-dimensional expanders; they include  geometric expanders, spectral expanders, coboundary expanders, cosystolic expanders and topological expanders.  We refer to \cite{Lubotzky2018} as a survey on this topic.
Here we focus on the notion of coboundary expanders, defined independently by Linial--Meshulam \cite{LinialMeshulam2006
}, Gromov \cite{Gromov2010} 
and Doterrer--Kahle \cite{DotterrerKahle12}. 

Coboundary expanders are defined in terms of a spectral gap, with respect to the Hamming norm, of the cohomological Laplacian in appropriate degrees. 
The cohomology here is simplicial cohomology with coefficients in the field $\mathbb{Z}_2$, however we will consider more general coefficients in any non-trivial abelian 
group, as in e.g. \cite{FirstKaufman22}.

Such expansion becomes interesting when one looks at sequences of spaces $X_i$ which grow globally but not locally, and which are expanders with the same associated constant. A prime example of such a situation is given by a family of finite covers of a fixed CW-complex, where the family converges to the universal cover. More explicitly, given a group $G$, a chain of finite-index 
subgroups $(N_i)_i$ is \emph{residual} if $\bigcap_i N_i$ is the trivial group. If $G$ were the fundamental group of a connected compact CW-complex $X$, letting $\widetilde X$ denote its universal cover, we obtain the associated residual chain of coverings $(N_i \backslash \widetilde X)_i$. The key question that we address in this article is: when does such a chain form an $n$-coboundary (or cocycle) expander?

\begin{theorem}
	\label{main}
	Let $X$ be a compact connected triangulated $n$-manifold, possibly with boundary. Suppose that $G = \pi_1(X)$ admits a residual chain $(N_i)_i$ of finite-index 
	 subgroups, and let $X_i = N_i \backslash \widetilde X$ be the corresponding finite covers of $X$. If the sequence $(X_i)_i$ forms an $(n-2)$-cocycle expander with coefficients in some non-trivial abelian group $\A$, then $G$ is Gromov hyperbolic.
\end{theorem}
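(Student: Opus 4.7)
The plan is to translate the assumed $(n-2)$-cocycle expansion of the covers $X_i$, via Poincar\'e--Lefschetz duality, into a uniform linear isoperimetric inequality for $1$-cycles bounded by $2$-chains; then transfer this inequality to the universal cover $\widetilde X$ using the residual hypothesis; and finally conclude Gromov hyperbolicity of $G$ from the resulting linear filling function.

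\emph{Step 1 (Dualize).} Equip each $X_i$ with the CW structure dual to its triangulation: an $(n-k)$-simplex corresponds tautologically to a $k$-dimensional dual cell, and the natural bijection $C^{n-k}(X_i;\mathcal{A}) \cong C_{k}(X_i^{*};\mathcal{A})$ identifies the simplicial coboundary $\delta$ with the cellular boundary $\partial$ and preserves the Hamming norm exactly. (For $X$ with non-empty boundary, Lefschetz duality gives the same dictionary with standard modifications along $\partial X$.) Under this correspondence the $(n-2)$-cocycle expansion hypothesis becomes the following uniform statement: there is a constant $C > 0$, independent of $i$, such that every null-homologous $1$-cycle $\beta$ in $X_i^{*}$ bounds a $2$-chain $\tau$ with $\|\tau\|_{\mathrm{Ham}} \leq C\|\beta\|_{\mathrm{Ham}}$.

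\emph{Step 2 (Lift to $\widetilde X$).} Fix a combinatorial loop $\gamma$ in $\widetilde X^{*}$ of length $\ell$ and project it to $\gamma_i \subset X_i^{*}$. Since $\widetilde X$ is simply connected, $\gamma_i$ is null-homologous, so Step~1 supplies a filling $\tau_i$ of Hamming weight $\leq C\ell$. Restricting $\tau_i$ to the connected component of $\gamma_i$ inside $\mathrm{supp}(\tau_i) \cup \gamma_i$ preserves both $\partial \tau_i = \gamma_i$ and the bound $\|\tau_i\|_{\mathrm{Ham}} \leq C\ell$, but now confines the support to a connected subcomplex $Y_i \subset X_i$ of combinatorial diameter $O(\ell)$. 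Residuality of $(N_i)_i$ forces the injectivity radius of $X_i$ on any fixed bounded region to tend to infinity, so for $i$ sufficiently large (depending on $\ell$) the subcomplex $Y_i$ lies in an injectivity ball and hence lifts isomorphically to a subcomplex of $\widetilde X$ containing $\gamma$. The corresponding lift $\widetilde \tau$ of $\tau_i$ satisfies $\partial \widetilde \tau = \gamma$ with $\|\widetilde \tau\|_{\mathrm{Ham}} \leq C\ell$.

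\emph{Step 3 (Hyperbolicity).} Together, Steps~1 and~2 show that $\widetilde X$ satisfies a linear homological filling inequality with coefficients in $\mathcal{A}$. To invoke Gromov's classical linear-Dehn-function criterion for hyperbolicity of $G$, one must upgrade this \emph{homological} filling to a \emph{homotopical} one. In the manifold setting with $n \geq 3$, this is achievable by exploiting that $\widetilde X$ is a simply connected $n$-manifold, so that $2$-cycles bound $3$-chains (by Hurewicz) and a $2$-chain bounding a loop can be ``thickened'' to a singular disk of comparable area. Gromov's criterion then yields that $G$ is Gromov hyperbolic.

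The main technical obstacle I expect is Step~3: the cocycle-expansion argument naturally delivers only a homological filling, and with coefficients in a potentially exotic abelian group $\mathcal{A}$, whereas Gromov's criterion requires a homotopical linear isoperimetric inequality. Managing possible torsion in $\mathcal{A}$ (for instance by reducing to cyclic coefficients) and making the thickening of $\mathcal{A}$-valued $2$-chains into honest disks quantitatively explicit using the ambient manifold topology are the delicate points. A secondary obstacle is the bookkeeping in Step~2: the minimum-weight filling $\tau_i$ need not be supported on a single connected piece attached to $\gamma_i$, and one must discard the stray $2$-cycles (which do not lift to closed cycles in $\widetilde X$ unless the injectivity radius exceeds their diameter) before lifting.
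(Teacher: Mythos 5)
Your Steps 1 and 2 are essentially the paper's argument with the two main moves performed in the opposite order: the paper first lifts the cochain inequality to $\widetilde X$ (via a locality lemma showing that a nearest cocycle to $c$ has support in a controlled coboundary-neighbourhood of $\supp c$, so that the inequality survives passage to a deep enough cover) and only then applies Poincar\'e duality in $\widetilde X$, whereas you dualize in each $X_i$ first and then lift the resulting filling. These are equivalent, and your observation that one must discard the ``stray'' $2$-cycles in a minimal filling before lifting is precisely the paper's locality lemma in dual form. Up to the usual care with Lefschetz duality along $\partial X$ (which the paper is equally terse about), this part is sound.

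The genuine gap is in Step 3. You assert that the homological linear filling inequality must be upgraded to a homotopical one before Gromov's criterion applies, and you propose to do this by thickening $\A$-valued $2$-chains into singular disks using Hurewicz and the manifold structure. This upgrade is exactly the hard content you cannot wave at: for torsion coefficients (say $\A = \ZZ_2$) a mod-$2$ chain with boundary $\gamma$ carries no orientation or multiplicity data from which to build a disk of comparable area, and ``reducing to cyclic coefficients'' does not help since the hypothesis is expansion over a single, possibly exotic, $\A$. The correct resolution is that no upgrade is needed: a linear homological Dehn function in dimension $1$ already implies hyperbolicity --- this is Gersten's theorem for $\A = \ZZ$, and the paper invokes \cite{KielakKropholler2021}*{Proposition 4.1} for arbitrary non-trivial abelian coefficients (the same reference also handles the fact that $\widetilde X$ is merely simply connected rather than aspherical). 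As written, your Step 3 replaces a citable theorem with a sketch that would not survive scrutiny, so the proof is incomplete at its final and most delicate step.
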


Since every coboundary expander is a cocycle expander (with the same associated constant), the theorem applies to coboundary expanders as well.
In some special cases we obtain interesting corollaries. The first concerns quotients of the symmetric space associated to $\mathrm{SL}_n(\RR)$, and can be readily generalised to symmetric spaces with spines, or with uniform lattices. 

\begin{corollary}
For every $n\geqslant 3$, the quotients of a closed equivariant regular neighbourhood of the well-rounded spine of the symmetric space $\mathrm{SL}_n(\RR)/\mathrm{SO}_n$ by torsion-free residual chains in $\mathrm{SL}_n(\ZZ)$ are not $m$-coboundary expanders where $m = n^2 - 3 - n(n-1)/2$.
\end{corollary}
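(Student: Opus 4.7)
The plan is to apply Theorem~\ref{main} directly, and to derive a contradiction from the non-hyperbolicity of finite-index subgroups of $\mathrm{SL}_n(\ZZ)$ for $n\geq 3$.

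First I would identify the numerical data. Setting $d = \dim \mathrm{SL}_n(\RR)/\mathrm{SO}_n = n^2 - 1 - n(n-1)/2$, one sees immediately that $m = d - 2$, which matches the exponent in Theorem~\ref{main}. The closed equivariant regular neighbourhood $N$ of the well-rounded spine is a codimension-zero $\mathrm{SL}_n(\ZZ)$-invariant submanifold with boundary of the symmetric space, and since the spine is an equivariant deformation retract of the (contractible) symmetric space, $N$ is contractible.

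Next I would set up the base manifold and its covers. Given a torsion-free residual chain $(N_i)_i$ in $\mathrm{SL}_n(\ZZ)$, each $N_i$ acts freely on $N$, so $X := N_1 \backslash N$ is a compact triangulated $d$-manifold with boundary, with universal cover $\widetilde X = N$ and fundamental group $\pi_1(X) = N_1$. The tail $(N_i)_{i \geq 1}$ is a residual chain in $N_1$, and the covers $N_i \backslash N$ appearing in the corollary are exactly the $X_i = N_i \backslash \widetilde X$ from Theorem~\ref{main}.

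Finally I would invoke non-hyperbolicity. For $n \geq 3$ the group $\mathrm{SL}_n(\ZZ)$ contains a $\ZZ^2$ subgroup, for instance the one generated by the commuting unipotents $I + E_{12}$ and $I + E_{13}$; hence the finite-index subgroup $N_1$ also contains a copy of $\ZZ^2$ (after intersecting, which still yields a free abelian group of the same rank) and is therefore not Gromov hyperbolic. The contrapositive of Theorem~\ref{main} rules out $(X_i)_i$ being a $(d-2)$-cocycle expander with coefficients in any non-trivial abelian group, and \emph{a fortiori} an $m$-coboundary expander. I do not foresee a significant obstacle: the only delicate points are the numerical identity $m = d - 2$ and the observation that the equivariant regular neighbourhood can be taken to be a PL manifold with boundary whose universal cover is contractible, both of which are standard from the theory of well-rounded spines.
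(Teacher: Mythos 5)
Your proposal is correct and is essentially the argument the paper intends (the corollary is stated without an explicit proof, but it follows exactly as you describe): the dimension count $m = \dim\bigl(\mathrm{SL}_n(\RR)/\mathrm{SO}_n\bigr) - 2$, the compactness and contractibility of the closed equivariant regular neighbourhood of the well-rounded spine, and the non-hyperbolicity of finite-index subgroups of $\mathrm{SL}_n(\ZZ)$ via a $\ZZ^2$ subgroup combine to give the contrapositive of Theorem~\ref{main}. No gaps.
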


The second interesting case is that of $3$-manifolds. Recall that the conjecture of Lubotzky--Sarnak asserts that  
the fundamental group of a finite-volume hyperbolic 3-manifold does not have property $\tau$; that is, for some sequence of finite-index normal subgroups 
intersecting only at the identity the Cayley graphs of the corresponding finite quotients do not form a family of expander graphs. 
The conjecture is true, since finite-volume hyperbolic $3$-manifolds are virtually RFRS, as shown by Agol \cite{Agol2013}, and non-trivial RFRS groups admit finite-index subgroups that are indicable, that is, admit an epimorphism to $\mathbb Z$. The existence of such an epimorphism is an obstruction for property $\tau$. (See also \cite{Longetal2008} for a construction of a family of expanders in this setting.)

Expander graphs are $0$-dimensional coboundary expanders, and so hyperbolicity seems to be an obstruction to the existence of such expanders. In contrast,  here we show that in dimension $1$ hyperbolicity is necessary. 

\begin{corollary}
Residual chains of covers of  compact connected irreducible non-hyperbolic $3$-manifolds are not $1$-coboundary expanders. 
\end{corollary}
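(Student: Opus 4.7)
The plan is to combine Theorem~\ref{main} specialised to $n = 3$ with Perelman's geometrization theorem. Since every $1$-coboundary expander is a $1$-cocycle expander with the same associated constant, the hypothesis of the corollary would feed into Theorem~\ref{main} with $n - 2 = 1$, forcing $G = \pi_1(X)$ to be Gromov hyperbolic. By contraposition, it then suffices to show that the fundamental group of a compact connected irreducible non-hyperbolic $3$-manifold cannot be Gromov hyperbolic (at least not in a way consistent with the existence of a non-trivial residual chain).

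For the three-manifold-theoretic step, I would analyse $X$ via its geometric decomposition. By Perelman's geometrization together with the JSJ (torus) decomposition, a compact irreducible $3$-manifold $X$ is either hyperbolic, Seifert fibered, or contains an essential torus. An essential torus produces an injection $\ZZ^2 \hookrightarrow \pi_1(X)$, which is incompatible with Gromov hyperbolicity. In the Seifert-fibered case with infinite $\pi_1$, the regular fiber generates an infinite cyclic normal subgroup of $\pi_1(X)$, and a Gromov-hyperbolic group containing such a subgroup must be virtually cyclic; combined with irreducibility, this reduces matters to spherical quotients. For $X$ with non-empty boundary, incompressible toral boundary components again contribute $\ZZ^2$ subgroups, while higher-genus boundary contributes surface subgroups that, together with the pieces they bound, are consistent with $X$ admitting a hyperbolic structure (e.g.\ via doubling along the boundary and invoking geometrization of the double).

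The remaining edge case is that of spherical space forms, where $\pi_1(X)$ is finite and hence trivially Gromov hyperbolic. Here the key observation is that any residual chain in a finite group stabilises at the trivial subgroup, so the associated chain of covers is eventually constant at the universal cover $\widetilde{X}$, and the assertion reduces to a question about a single finite CW complex which can be handled directly. The main obstacle is thus entirely in the bookkeeping of the $3$-manifold step — carefully running through the closed, bounded, Seifert-fibered, and spherical cases — rather than in any new technical content beyond Theorem~\ref{main}.
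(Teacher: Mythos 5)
Your overall route is the intended one: the paper offers no separate argument for this corollary, and it is meant to follow from Theorem~\ref{main} with $n=3$ (coboundary expanders being cocycle expanders with the same constant) together with geometrization/JSJ to convert ``$\pi_1(X)$ Gromov hyperbolic'' into ``$X$ hyperbolic'' for irreducible $3$-manifolds. Your treatment of the toroidal and Seifert-fibered cases via $\ZZ^2$-subgroups and infinite cyclic normal subgroups is the standard and correct way to do this.

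There are, however, two soft spots. First, the finite-fundamental-group case does \emph{not} ``work out directly'' --- it goes the wrong way. For a lens space $L(p,q)$, any residual chain stabilises at the trivial subgroup, so the covers stabilise at a triangulated $S^3$; since $H^1(S^3;\A)=0$, every $1$-cochain that is not a cocycle has nonzero coboundary, and finiteness gives a uniform constant $\lambda_1 \geqslant 1/|X^{(1)}|$, so this chain \emph{is} a $1$-coboundary expander. Thus the spherical case is not an edge case you can wave through; it is a case where the statement needs an implicit convention (infinite fundamental group, asphericity, or a properly decreasing chain), and your write-up should say so rather than assert the opposite. Second, the infinite elementary cases --- the solid torus and, more generally, handlebodies, which are compact, irreducible, non-hyperbolic under the finite-volume convention, and have Gromov-hyperbolic (free) fundamental group --- are not excluded by Theorem~\ref{main} or by geometrization at all. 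They are excluded by the separate observation, made in Section~2 of the paper, that an $n$-coboundary expander must have vanishing $H^n$: finite-index subgroups of $\ZZ$ or of a nonabelian free group always surject onto $\ZZ$, so $H^1(X_i;\A)\neq 0$ for every cover. Your appeal to doubling along the boundary and geometrizing the double does not supply this step and should be replaced by the cohomological one.
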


\subsection*{Acknowledgements} We would like to thank Alex Lubotzky for insightful comments.

This work has received funding from the European Research Council (ERC) under the European Union's Horizon 2020 research and innovation programme (Grant agreement No. 850930).

PN was supported by  the National Science Center Grant Maestro-13 UMO-2021/42/A/ST1/00306.

\section{Coboundary and cocycle expansion}

Throughout the article, $\A$ is a fixed non-trivial abelian group. 
Let $X$ be a connected CW-complex. We will consider (co)homology of $X$ with coefficients in $\A$. 
By $C_n(X,\A)$ (respectively $C^n(X,\A)$), we denote the set of  cellular $n$-chains (respectively, cellular $n$-cochains) with coefficients in $\A$. 
The set of $n$-cycles will be denoted by $Z_n(X,\A)$, of $n$-cocycles by $Z^n(X,\A)$, and similarly, $B_n(X,\A)$ and  $B^n(X,\A)$ denote sets of boundaries and coboundaries,
respectively. The set of compactly supported $n$-cochains will be denoted by $C^n_\mathrm{comp}(X,\A)$, and similarly, compactly supported 
$n$-cocycles will form $Z^n_{\mathrm{comp}}(X,\A)$. 
By $d$ we will denote the standard differential and by $d^*$ the co\-differential.

For any chain or cochain $c$ in the above setting, the Hamming norm $\Vert c\Vert_\Ham$  is defined to be the cardinality of the support of $c$, 
and the Hamming distance to be $d_\Ham(c,c')=\Vert c-c'\Vert_\Ham$.

A finite CW-complex $X$ is an \emph{$n$-coboundary expander with associated constant $\lambda_n>0$ and coefficients in $\A$} if  for every $n$-cochain $c \in C^n(X; \A)$ the inequality 
\begin{equation}
\| d^*c \|_\Ham \geqslant \lambda_n \dH\left(c, B^n(X;\A)\right)
\end{equation}
holds. This definition has to be modified for $n=0$, when one replaces $B^0(X;\A)$ by $Z^0(X;\A)$. 

Motivated partially by this modification, and partly by the method of our proof, we introduce the following variation.
The complex $X$ is an \emph{$n$-cocycle expander with associated constant $\lambda_n>0$ and coefficients in $\A$} if  for every 
$n$-cochain $c\in C^n(X,\A)$ we have 
\begin{equation}
\| d^*c \|_\Ham \geqslant \lambda_n \dH\left(c, Z^n(X;\A)\right).
\end{equation}
Since the coefficient group $\A$ will be fixed, we will from now on omit it from notation.

Observe that an $n$-coboundary expander for $\lambda_n>0$ satisfies $H^n(X;\A)=0$. Indeed, a non-trivial cohomology class 
is represented by a non-trivial cocycle $c$ satisfying $\Vert d^*c\Vert_{\Ham}=0$ and $d(c, B^n(X,\A))>0$. Thus a couboundary expander is 
a cocycle expander with the same constant. 

A family $\{ X_i \}$ of complexes is a family of $n$-dimensional coboundary, respectively cocycle, \emph{expanders} if there exists a $\lambda_n>0$ such that 
each $X_i$ is an $n$-coboundary expander, respectively $n$-cocycle expander, with  constant $\lambda_n$. 

\section{Locality and lifting of expansion}

We first introduce a useful generalization of the path metric on a graph. Given a CW-complex $X$, we let $X^{(n)}$ denote the set of its $n$-cells.
We will say that a subset $A\subseteq X^{(n)}$  is \emph{coboundary connected} if for any two $n$-cells $\sigma, \tau \in A$ there exits a family of $n$-cocycles 
$\lbrace\tau_0,\dots, \tau_k\rbrace$,  each supported on a single $n$-cell in $A$,  such that
\[\supp (d^* \tau_i)\cap \supp (d^*\tau_{i+1}) \neq \emptyset,\]
and with $\{\tau\}=  \supp \tau_0$ and $\{\sigma\} = \supp \tau_k$.
Being coboundary connected is an equivalence relation on elements of $A$. Equivalence classes of this relation will be called \emph{coboundary-connected components} of $A$.

For two $n$-cells $\tau$ and $\sigma$ in $X^{(n)}$, the minimal $k$ for a chain as above (if it exists) will be 
referred to as the \emph{coboundary distance}. It is easy to see that the coboundary distance is a metric taking values in $\mathbb{N}\cup\{0, \infty\}$
and it reduces to the path metric on the vertices of a graph when $n=0$.

\subsection{Locality of cocycle expansion}
We will now show that cocycle expansion, and in particular coboundary expansion, is local.  
\begin{lemma}
Let $X$ be a compact CW-complex. Consider a cochain $c\colon X^{(n)}\to \A$ and let $z$ be an $n$-cocycle such that 
\[
\dH(c, Z^n(X)) = \dH(c,z).
\]
Then for every non-empty coboundary-connected component $\zeta$ of $\supp z$ we have 
that $\zeta$ and $\supp c$ are not disjoint. 
\end{lemma}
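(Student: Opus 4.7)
The plan is to argue by contradiction. Suppose some non-empty coboundary-connected component $\zeta$ of $\supp z$ is disjoint from $\supp c$. I will produce an $n$-cocycle strictly closer to $c$ in Hamming distance than $z$, contradicting the minimality of $\dH(c,z)$. The candidate is the truncation $z'' := z - z|_\zeta$, obtained by zeroing $z$ on the cells of $\zeta$. Writing $z' = z|_\zeta$ so that $z = z' + z''$, the whole argument reduces to showing that $z''$ (equivalently $z'$) is a cocycle, since then on $X^{(n)}\setminus\zeta$ the cochains $z$ and $z''$ agree, while on the non-empty set $\zeta$ we have $c = 0 \neq z$ (so every cell of $\zeta$ contributes $1$ to $\dH(c,z)$) and $z'' = 0 = c$ (so those cells contribute $0$ to $\dH(c,z'')$); hence $\dH(c,z'') \leqslant \dH(c,z) - |\zeta| < \dH(c,z)$.

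The heart of the argument, and the only real obstacle, is proving that $z'$ is a cocycle. Since $d^*z = 0$, we have $d^*z' = -d^*z''$, so it suffices to show that $\supp d^*z'$ and $\supp d^*z''$ are disjoint subsets of $X^{(n+1)}$; combined with the identity they are then both empty. Suppose for contradiction that some $(n{+}1)$-cell $e$ belongs to both supports. Expanding
\[
d^*z'(e) = \sum_{\sigma\in\zeta} [\sigma:e]\,z(\sigma), \qquad d^*z''(e) = \sum_{\tau\in\supp z\setminus\zeta} [\tau:e]\,z(\tau),
\]
the non-vanishing of the left-hand sum forces the existence of some $\sigma\in\zeta$ with $[\sigma:e]\,z(\sigma)\neq 0$ in $\A$, and similarly some $\tau\in\supp z\setminus\zeta$ with $[\tau:e]\,z(\tau)\neq 0$.

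Now consider the two single-cell cochains $z(\sigma)\mathbf{1}_\sigma$ and $z(\tau)\mathbf{1}_\tau$; both are supported on cells of $\supp z$, and the supports of their coboundaries both contain $e$ by the previous sentence. By the definition of coboundary-connectedness, this is a length-one witnessing chain showing that $\sigma$ and $\tau$ lie in the same coboundary-connected component of $\supp z$, contradicting $\sigma\in\zeta$ and $\tau\notin\zeta$. Hence no such $e$ exists, both supports are empty, $z''$ is a cocycle, and the Hamming comparison above supplies the required contradiction with the minimality of $\dH(c,z)$. The rest of the argument is purely bookkeeping; the only subtlety is that the witnesses $z(\sigma)\mathbf{1}_\sigma$ and $z(\tau)\mathbf{1}_\tau$ use the actual values of $z$ as coefficients, so that the non-vanishing of the individual terms in the sums translates directly into non-empty overlap of coboundary supports in the sense of the definition.
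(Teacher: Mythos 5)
Your proof is correct and follows essentially the same route as the paper: truncate $z$ on the offending component $\zeta$, show the truncation is still a cocycle, and observe that it is strictly closer to $c$. The one difference is that the paper simply asserts $d^*\bigl(z|_\zeta\bigr)=0$ as a consequence of $\zeta$ being a coboundary-connected component, whereas you supply the actual argument (a shared $(n{+}1)$-cell in $\supp d^*\bigl(z|_\zeta\bigr)=\supp d^*\bigl(z-z|_\zeta\bigr)$ would connect $\zeta$ to its complement in $\supp z$), which is a worthwhile addition rather than a deviation.
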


\begin{proof}
Assume the contrary, that is, that there exists $\zeta$, a non-empty co\-boundary-connected component of $\supp z$, such that $\zeta$ and $\supp  c$ are disjoint.
Let $z_0 = z|_\zeta$ and $z' = z - z_0$. Since $\zeta$ is a coboundary-connected component, we have 
\[d^*z_0 = 0 = d^*z'.\]
Moreover, since $\zeta \cap \supp c = \emptyset$, the functions $z'$ and $z$ agree on $\supp c$. 
They obviously also agree on $X^{(n)} \smallsetminus \zeta$. 

As the Hamming distance $\dH(c,z)$ counts the cardinality of the support of the difference $c-z$, and since $\zeta \cap \supp c = \emptyset$,
we obtain that 
$$\dH(c,z')< \dH(c,z)=\dH(z,Z^n(X)),$$
which is a contradiction.
\end{proof}

As an immediate consequence we obtain the following.

\begin{corollary}[Locality of cocycle expansion]
	\label{locality}
Let $X$ be a compact CW-complex, and let $c\colon X^{(n)}\to \A$ be a cochain. If  $z$ is a cocycle satisfying 
\[\dH(c,z)=\dH(c, Z^{n}(X)),\]
then $\supp z$ lies in the ${\dH(c,Z^{n}(X)) }$-neighbourhood of $\supp c$ with respect to the coboundary distance.
\end{corollary}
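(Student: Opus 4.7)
The plan is to convert the preceding lemma, which only asserts that every coboundary-connected component of $\supp z$ meets $\supp c$, into the quantitative neighbourhood bound. Write $D = \dH(c, Z^n(X)) = \dH(c, z)$. Fix an arbitrary $\sigma \in \supp z$; the task is to exhibit a coboundary-connecting sequence from $\sigma$ to $\supp c$ of length at most $D$. When $\sigma \in \supp c$ the coboundary distance is zero, so from now on assume $\sigma \notin \supp c$.

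The lemma supplies a coboundary-connected component $\zeta \subseteq \supp z$ that contains $\sigma$ and meets $\supp c$. Unfolding the definition of coboundary-connectedness inside $\zeta$, one obtains a finite sequence $\sigma = \sigma_0, \sigma_1, \dots, \sigma_m$ of cells of $\zeta$ in which each consecutive pair shares an $(n+1)$-cell in their single-cell coboundary supports, and $\sigma_m \in \supp c$. Truncating at the first time the sequence hits $\supp c$ yields a shorter sequence $\sigma_0, \dots, \sigma_k$ with $\sigma_k \in \supp c$ and $\sigma_0, \dots, \sigma_{k-1} \notin \supp c$; this witnesses that the coboundary distance from $\sigma$ to $\supp c$ is at most $k$.

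It remains to bound $k$ by $D$. The $k$ cells $\sigma_0, \dots, \sigma_{k-1}$ are distinct, lie in $\zeta \subseteq \supp z$, and are by construction disjoint from $\supp c$; hence on each of them $c$ vanishes while $z$ does not, placing them in $\supp(c-z)$. Therefore $k \leq |\supp(c-z)| = \dH(c,z) = D$, which is exactly the required bound. The only step that needs genuine attention is the truncation, where one must be sure that the intermediate cells remain inside $\supp z$ so as to contribute to the Hamming distance; this is exactly what working inside the component $\zeta \subseteq \supp z$ guarantees. Beyond that there is no real obstacle: the lemma has done the structural work, and only a cardinality count remains.
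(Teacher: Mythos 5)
Your argument is correct and is exactly the reasoning the paper leaves implicit when it presents the corollary as an ``immediate consequence'' of the preceding lemma: every coboundary-connected component of $\supp z$ meets $\supp c$, and each intermediate cell of a connecting chain lies in $\supp z \smallsetminus \supp c \subseteq \supp(c-z)$, so the chain length is bounded by $\dH(c,z)$. The one point to tidy is the claimed distinctness of $\sigma_0,\dots,\sigma_{k-1}$, which truncation alone does not guarantee; take a shortest connecting chain (or delete the segment between any repeated cell) and distinctness follows.
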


\subsection{Lifting}

The fact that the cocycle and coboundary expansion implies local estimates on diameters of certain chains as in the previous lemmas, allows us to lift 
the expansion in a residual family of coverings. 

\begin{proposition}[Lifting of cocycle expansion]
	\label{lifting}
Let $X$ be a connected compact CW-complex, and let $(X_i)_i$ be a residual sequence of coverings of $X$. If every $X_i$ is an $n$-cocycle expander with constant $\lambda_n > 0$, then
 for every compactly supported cochain $c \colon  \widetilde{X}^{(n)}\to \A$ we have
$$\Vert d^*c \Vert_{\Ham}  \geqslant \lambda_n {\dH(c,Z_\mathrm{comp}^{n}(\widetilde X)) }.$$
\end{proposition}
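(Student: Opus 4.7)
The plan is to convert the problem on $\widetilde X$ into a problem on some finite cover $X_i$, apply the cocycle-expansion hypothesis there, and lift the resulting optimal cocycle back to $\widetilde X$; \Cref{locality} is precisely what makes this lift well defined.

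Assume $d^*c\ne 0$ (otherwise the inequality is trivial) and set $R=\Vert d^*c\Vert_\Ham/\lambda_n$. Fix a finite subcomplex $K\subseteq\widetilde X$ containing every $n$-cell within coboundary distance $R+1$ of $\supp c$ together with every $(n+1)$-cell adjacent to any such $n$-cell. Finiteness of $K$ follows from local finiteness of $\widetilde X$, which holds since $X$ is compact. Residuality of $(N_i)_i$ then yields some $i$ such that no non-identity element of $N_i$ sends a cell of $K$ into $K$, whence $p_i\colon\widetilde X\to X_i$ restricts to a cellular isomorphism from $K$ onto $p_i(K)$. Define the transferred cochain $c_i\in C^n(X_i;\A)$ by $c_i(\sigma)=\sum_{\tilde\sigma\in p_i^{-1}(\sigma)}c(\tilde\sigma)$; injectivity of $p_i$ on $\supp c\cup\supp d^*c\subseteq K$ gives $\Vert d^*c_i\Vert_\Ham=\Vert d^*c\Vert_\Ham$.

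The cocycle expansion of $X_i$ produces $z_i\in Z^n(X_i;\A)$ with $\dH(c_i,z_i)=\dH(c_i,Z^n(X_i))\leqslant R$. By \Cref{locality}, $\supp z_i$ lies within coboundary distance $R$ of $\supp c_i$ in $X_i$; tracing a shortest such coboundary path backwards from each cell of $\supp z_i$ into $\supp c_i$ and lifting it cell-by-cell into $K$ (using the cellular isomorphism $p_i|_K$ at each step) shows $\supp z_i\subseteq p_i(K)$. Define $z\in C^n_\mathrm{comp}(\widetilde X;\A)$ by $z(\tilde\sigma)=z_i(p_i(\tilde\sigma))$ for $\tilde\sigma\in K$ and $z(\tilde\sigma)=0$ otherwise.

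The main technical step is verifying $z$ is a cocycle. For an $(n+1)$-cell $\tilde\tau$ of $\widetilde X$ whose boundary misses $\supp z$, $d^*z(\tilde\tau)=0$ trivially; otherwise $\tilde\tau$ is adjacent to an $n$-cell within coboundary distance $R$ of $\supp c$, so the $(R+1)$-buffer built into $K$ forces both $\tilde\tau$ and its full $n$-boundary into $K$, and $d^*z(\tilde\tau)=d^*z_i(p_i(\tilde\tau))=0$. Finally, $\supp(c-z)\subseteq K$ and injectivity of $p_i|_K$ give $\dH(c,z)=\dH(c_i,z_i)\leqslant R$, which is the desired bound. The delicate point throughout is the sizing of the buffer in $K$: wide enough that locality confines $\supp z_i$ to $p_i(K)$ and every $(n+1)$-cell touching the lifted support lies in $K$, yet finite so that residuality produces a suitable $i$.
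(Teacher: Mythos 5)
Your proof follows the same route as the paper's: use residuality to find a finite cover $X_i$ on which a sufficiently large coboundary-distance neighbourhood of $\supp c$ embeds, push $c$ down, apply cocycle expansion there, invoke \cref{locality} to confine the optimal cocycle to that neighbourhood, and lift it back to $\widetilde X$. Your write-up is correct and in fact spells out the buffer-sizing and the verification that the lift is a cocycle in more detail than the paper does.
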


\begin{proof}
Let $c\colon \widetilde{X}^{(n)}\to \A$ be finitely supported and let $R = \| d^*c \Vert_\Ham$. Since the sequence $(X_i)_i$ is residual, there exists an $i$ such that 
the quotient map $\rho \colon \widetilde X \to X_i$ takes the $(\frac R {\lambda_n} + 1)$-neighbourhood (with respect to the coboundary distance) of 
$\supp c$ identically to its image.
The cochain $c$ now induces a cochain $c' \colon {X_i}^{(n)} \to \A$ via the restriction of $\rho$ to this neighbourhood in the obvious way. In particular, 
\[
\| d^*c' \|_\Ham = \| d^*c \|_\Ham = R,\]
and consequently
\[
\lambda_n \dH\left(c',Z^{n}(X_i)\right) \leqslant R.
\] 
By \cref{locality}, there exists a cocycle $z' \in Z^{n}(X_i)$ such that
\[
\dH\left(c',z'\right) \leqslant \frac R {\lambda_n}
\]
and $\supp z'$ is contained in the $\frac R {\lambda_n}$-neighbourhood of $\supp c'$. 
However, this neighbourhood is homeomorphic to that of $\supp c$ via $\rho$, and hence we may lift $z'$ to a cocycle $z \in Z^n_{\mathrm{comp}}(\widetilde X)$. Clearly, 
\[
\dH(c,z) = \dH(c',z'),
\] 
and so
\[
\lambda_n \dH\left( c,z \right) \leqslant R  =  \| d^*c \Vert_\Ham
\]
as claimed.
\end{proof}

\section{Homological Dehn functions and hyperbolicity}

We are now going to focus on the situation described in \cref{main}. Hence, we will have a triangulated compact connected $n$-manifold $X$ with $G = \pi_1(X)$.

We are going to show that $G$ is Gromov-hyperbolic by showing that its homological Dehn function over $\A$  in dimension $1$ is linear; that is, 
our task is to find a constant $\kappa > 0$ such that every simplicial $1$-cycle $p \in Z_1(\widetilde X,\A)$ 
is the boundary of a simplicial $2$-chain $q \in C_2(\widetilde X,\A)$ such that
\[
\Vert q\Vert_{\Ham} \leqslant \kappa \Vert p \Vert_{\Ham}.
\]
That this is sufficient
follows from the work of Gersten \cite{Gersten1996}*{Theorem 5.2} for $\A = \ZZ$; for other non-trivial  abelian groups one can use the proof of \cite{KielakKropholler2021}*{Proposition 4.1}.

Observe that, to stay true to Gersten's theorem, we should be working with the universal covering of a classifying space of $G$ rather than $\widetilde X$, which might not be one, since $X$ is not necessarily aspherical. However,
$\widetilde X$ is simply connected, and this is sufficient since we are looking at isoperimetric inequalities in dimension $1$; see the proof of \cite{KielakKropholler2021}*{Proposition 4.1} for details.
 
Recall that $X$ is  triangulated. When considering cochains, we will always use simplicial cochains with respect to the triangulation of $X$ we are given. When considering chains, we will look at the dual of the triangulation, which is a polyhedral structure, and hence in particular a CW-structure. All our chains will be cellular with respect to this structure.
Throughout, all of our chains and cochains will have coefficients in $\A$.

\begin{proof}[Proof of Theorem \ref{main}]
Let $p\in Z_1(\widetilde{X})$ be given. 
Since $\widetilde X$ is simply connected, there exists a $2$-chain $r\in C_2(\widetilde{X})$ with $dr = p$. 

Poincar\'e duality applied to $\widetilde X$ gives us a compactly supported $(n-1)$-cocycle $z\in Z^{n-1}_{\mathrm{comp}}(\widetilde{X})$ dual to $p$, 
and a compactly supported $(n-2)$-cochain $c$ dual to $r$ such that $d^*c = z$. Note that these cochains are also naturally combinatorial with respect to the  triangulation  of $X$, as explained above. Crucially for us, with respect to this combinatorial structure, Poincar\'e duality does not change the Hamming norm, and so 
\[
\Vert r \Vert_{\Ham} = \Vert c \Vert_{\Ham} \text{\ \ \ \ and\ \ \ \ } \Vert p \Vert_{\Ham} = \Vert z \Vert_{\Ham}.
\]

Recall that we are given a residual family
   of finite covers $X_i$ of $\widetilde{X}/G$ forming an $(n-2)$-cocycle expander with constant $\lambda_{n-2}>0$. 
\cref{lifting} yields
\[
{\Vert d^*c\Vert_{\Ham} } \geqslant {\lambda_{n-2}} {\dH(c,Z^{n-2}_\mathrm{comp}(\widetilde X)) }.
\]

Let $\kappa = 1/ {\lambda_{n-2}}$ and let $z' \in Z_\mathrm{comp}^{n-2}(\widetilde X)$ be a compactly supported cocycle such that 
\[
\dH(c,z') =  \dH(c,Z^{n-2}_\mathrm{comp}(\widetilde X)). 
 \]
  Rearranging the above equation yields
\[
\|c-z'\|_\Ham = \dH(c,z') \leqslant \kappa \| z \|_\Ham.
\]
Note that $d^*(c-z') = d^*c = z$.

Applying Poincar\'e duality again we obtain a $2$-chain $q\in C_2(\widetilde{X})$ which is dual to the compactly supported cochain 
$c-z'\in C^{n-2}_{\mathrm{comp}}(\widetilde{X})$,  satisfying $dq = p$ and 
\[
\|q\|_\Ham \leqslant \kappa \| p \|_\Ham .
\]
This proves that $G$ admits a linear homological isoperimetric inequality in dimension $1$, and is therefore Gromov hyperbolic. 
\end{proof}

We remark that while our main result is formulated for dimension $n-2$, the above argument works equally well in other dimensions, as long as $\widetilde X$ is acyclic in the appropriate dimension. 
However, only in dimension 1 the linear isoperimetric inequality has a meaningful geometric interpretation. It would be extremely 
interesting to  characterize  linear isoperimetric inequalities in dimensions other than 1 geometrically.

When $G = \pi_1 (X)$ is infinite, the universal covering $\widetilde X$ is unbounded and hence does not satisfy a linear isoperimetric inequality in dimension $0$. Indeed, for every $\kappa > 0$ there exist points in $\widetilde X$ with distance greater than $\frac 1 {\kappa}$, and hence a suitable $0$-cycle supported on these points will have Hamming norm $2$, but will only be the boundary of a $1$-chain of Hamming norm greater than $\frac 1 {\kappa}$. This implies that residual chains of finite-index covers of a compact connected triangulated $n$-manifold with infinite fundamental group are never $n-1$-cocycle expanders.

\bibliography{bibliography}

\end{document}